\theoremstyle{plain}
\newtheorem{theorem}{Theorem}[section]
\newtheorem{remark}[theorem]{Remark}
\newtheorem{proposition}[theorem]{Proposition}
\begin{document}

\journal{(internal report CC24-8)}

\begin{frontmatter}

\title{Solutions for $k$-generalized Fibonacci numbers using Fuss-Catalan numbers}

\author[cc]{S.~R.~Mane}
\ead{srmane001@gmail.com}
\address[cc]{Convergent Computing Inc., P.~O.~Box 561, Shoreham, NY 11786, USA}

\begin{abstract}
  We present new expressions for the $k$-generalized Fibonacci numbers, say $F_k(n)$.
  They satisfy the recurrence $F_k(n) = F_k(n-1) +\dots+F_k(n-k)$.
  Explicit expressions for the roots of the auxiliary (or characteristic) polynomial are presented, using Fuss-Catalan numbers.
  Properties of the roots are enumerated.
  We quantify the accuracy of asymptotic approximations for $F_k(n)$ for $n\gg1$.
  Our results subsume and extend some results published by previous authors.
  We also present a basis (or `fundamental solutions') to solve the above recurrence for arbitrary initial conditions.  
  We comment on the use of generating functions and multinomial sums for the $k$-generalized Fibonacci numbers and related sequences.
  We note that the resulting multinomial sums are Dickson polynomials of the second kind in several variables.
  We also present what may be a new identity for companion matrices.
\end{abstract}

\begin{keyword}
  Generalized Fibonacci numbers
  \sep Fuss-Catalan numbers
  \sep recurrences
  \sep Vandermonde systems

%\vskip 0.25in
% MSC2020 codes here, in the form: \MSC code \sep code
\MSC[2020]{
%primary 
11B39  % Fibonacci and Lucas numbers
\sep 11B37  % recurrences
\sep 39A06  % linear difference equations
}

\end{keyword}

\end{frontmatter}

%\newpage
\setcounter{equation}{0}
\section{Introduction}\label{sec:intro}
The origin of this paper is somewhat curious.
The author coauthored a paper \cite{DM3} on the success run waiting times of Bernoulli trials.
Recently, the author noticed that, for the special case $p=\frac12$ ($p$ being the success parameter of the Bernoulli trials),
elementary transformations yielded the solution for the so-called `$k$-generalized' Fibonacci numbers, say $F_k(n)$.
These numbers satisfy the recurrence
\begin{equation}
\label{eq:rec_Fnk}
F_k(n) = F_k(n-1) +\dots +F_k(n-k) \,.
\end{equation}
The usual Fibonacci numbers are the special case $k=2$.

Our main contribution is to display
explicit series solutions for all the roots of the characteristic equation of the above recurrence, for arbitrary $k\ge2$
(leveraging formulas from \cite{DM3}).
Solutions in radicals are known for special cases such as the usual Fibonacci, tribonacci and tetranacci numbers.
It has long been known that the characteristic equation has a unique positive real root (the `principal root'),
whose magnitude exceeds that of all the other roots.
Some authors, e.g.~Wolfram \cite{Wolfram}, have derived expressions for the principal root and/or upper and lower bounds for it.
We present expressions for \emph{all} the roots and moreover enumerate several properties of the roots.
For example,
(i) there are no pure imaginary roots,
and (ii) the smaller the real part of a root (less positive or more negative), the smaller its amplitude.
We solve an associated Vandermonde system of equations and derive a Binet-style formula for the solution of the recurrence.
(Other authors have also published the same or similar Binet-style formulas,
e.g.~Dresden and Du \cite{DresdenDu}, but the roots had to be computed numerically, for example via a root-finding algorithm.)

We then apply our formalism to treat related problems published by other authors.
For example, Spickerman and Joyner \cite{SpickermanJoyner} solved the recurrence eq.~\eqref{eq:rec_Fnk},
but with the initial values $1,1,2,4,8,\dots$.
We generalize from powers of $2$ to powers of an arbitrary number $\mu$.
Our solution is presented in Sec.~\ref{sec:SJ}.

Next, in Sec.~\ref{sec:basis_seq}
we present a set of `basis sequences' or `fundamental solutions' of the recurrence eq.~\eqref{eq:rec_Fnk}.
The usual scenario is to solve eq.~\eqref{eq:rec_Fnk} only for the initial conditions $F_k(0)=\dots=F_k(k-2)=0$ and $F_k(k-1)=1$,
i.e.~the initial $k$-tuple $(0,\dots,0,1)$.
We present solutions for the other initial $k$-tuples $(1,0,\dots,0)$ through $(0,\dots,0,1,0)$.
To the author's knowledge, such a set of basis sequences has not been published in the literature.
We leverage work from papers on solutions of linear recurrences, in particular by Wolfram \cite{Wolfram2000}.

Sec.~\ref{sec:compmtx} reviews the use of a companion matrix to solve linear recurrences with constant coefficients,
mainly to make contact with the material in Sec.~\ref{sec:mult_sum}.
In this context, employing a lemma by Wolfram \cite{Wolfram2000},
we present what may be a new identity for companion matrices.
See Prop.~\ref{thm:ident_cmpmtx}.

Finally, in Sec.~\ref{sec:mult_sum} we remark on the use of generating functions and multinomial sums to solve linear recurrences with constant coefficients.
In this case, our presentation is more of an independent confirmation of known results,
to demonstrate the application of our formalism for basis sequences.
We treat the Narayana, Padovan and Perrin sequences.
A referee kindly brought to the author's attention that the multinomial sums in Sec.~\ref{sec:mult_sum}
are Dickson polynomials of the second kind in several variables.

%\newpage
\setcounter{equation}{0}
\section{Roots of the characteristic equation}\label{sec:fusscat}
Eq.~\eqref{eq:rec_Fnk} is a homogeneous linear difference equation and its characteristic equation is
\begin{equation}
\label{eq:aux_eq}
x^k - \sum_{j=0}^{k-1}x^j = 0 \,.
\end{equation}
The corresponding auxiliary (or characteristic) polynomial is
$A(x) = x^k -x^{k-1} -\dots -1$.
Several authors \cite{Wolfram,Miles,Miller} have shown that the roots of $A(x)$ are simple.
Let us denote the roots of $A(x)$ by $\zeta_j$, $j=0,\dots,k-1$.
Then we express $F_k(n)$ as a weighted sum of powers of the roots as follows
(i.e.~a Binet-style formula)
\begin{equation}
\label{eq:Fn_sum_roots}
F_k(n) = \sum_{j=0}^{k-1} c_j\zeta_j^n \,.
\end{equation}
Here $\{c_0,\dots,c_{k-1}\}$ is a set of coefficients which do not depend on $n$,
but they do depend on $k$ and the initial conditions, i.e.~the values of $F_k(0)$ through $F_k(k-1)$.
It is standard to employ the initial conditions $F_k(0)=\dots=F_k(k-2)=0$ and $F_k(k-1)=1$.
We shall begin with this, but later we shall treat alternative initial conditions,
by Spickerman and Joyner \cite{SpickermanJoyner}.
In Sec.~\ref{sec:basis_seq}, we present a complete set of `fundamental solutions' for the recurrence eq.~\eqref{eq:rec_Fnk},
i.e.~a basis which can be used to solve for arbitrary initial conditions.

Our main result is to present explicit expressions for the roots $\zeta_j$,
in terms of so-called `Fuss-Catalan numbers'
(see the text by Graham, Knuth and Patashnik \cite{GrahamKnuthPatashnik}
for relevant definitions, formulas and identities).
The connection is somewhat surprising.
Dilworth and Mane \cite{DM3} (hereafter `DM') published a paper on the success run waiting times of Bernoulli trials.
The recurrence for the probability mass function $f_k(n)$
is (\cite{DM3} eq.~(1)), with $p\in(0,1)$, $q=1-p$ and $k\ge2$,
\begin{equation}
\label{eq:rec_DM3}
f_k(n) = qf_k(n-1) +pqf_k(n-2) +p^2qf_k(n-3) +\cdots +p^{k-1}qf_k(n-k) \,.
\end{equation}
Set $p=q=\frac12$ and $f_k(n) = F_k(n)/2^n$; then elementary manipulations yield eq.~\eqref{eq:rec_Fnk}.
Hence the solution for the $k$-generalized Fibonacci numbers $F_k(n)$
equals $2^n$ times the probability mass function $f_k(n)$ of success run waiting times of Bernoulli trials,
with $p=q=\frac12$.
The auxiliary polynomial associated with eq.~\eqref{eq:rec_DM3} is
$\mathcal{A}_p(x) = x^k -qx^{k-1} -pqx^{k-2} -\cdots -p^{k-1}q$.
Barry and Lo Bello \cite{BarryLoBello} showed that the roots of $\mathcal{A}_p(x)$ are simple,
which goes to show that there are fruitful connections between different fields of research.
Let us denote the roots of $\mathcal{A}_p(x)$ by $\lambda_j$, $j=0,\dots,k-1$.
Then $\zeta_j=2\lambda_j$, with $p=q=\frac12$.
The following properties of the roots were listed by DM (\cite{DM3} Prop.~13).
\begin{enumerate}
\item
It is known that $\mathcal{A}_p(x)$ has a unique positive root, which has a larger magnitude than all the other roots \cite{Feller}.
We denote it by $\lambda_0$ below.
\item
If $k$ is odd, there are no other real roots. If $k$ is even, there is exactly one real negative root.
\item
For all $k$, the remaining roots come in complex conjugate pairs.
\item
There are no pure imaginary roots.
\end{enumerate}
Wolfram \cite{Wolfram} proved the first three results above for the case of the $k$-generalized Fibonacci numbers.
The last item may not be so well-known.
DM employed the terms `pricipal root' for $\lambda_0$
and `secondary roots' for the other $\lambda_j$, $j=1,\dots,k-1$.

DM obtained expressions for the roots $\lambda_j$ as power series,
whose coefficients are Fuss-Catalan numbers (\cite{DM3} Theorem 1).
They identified three cases, viz.~(i) $p<k/(k+1)$, (ii) $p=k/(k+1)$ and (iii) $p>k/(k+1)$.
We require only the first case, because we fix $p=\frac12$ and $k/(k+1)>\frac12$ for all $k>1$.
First define, for $n\ge1$,
\begin{equation}
\label{eq:def_bn}
b_n = \frac{\Gamma(n-1+n/k)}{n!\Gamma(n/k)} \,.
\end{equation}
The Fuss-Catalan numbers are defined as follows (\cite{DM3} eq.~(9))
\begin{equation}
\label{eq:FussCat_def}
A_m(\nu,r) = \frac{r}{\Gamma(m+1)}\frac{\Gamma(m\nu+r)}{\Gamma(m(\nu-1)+r+1)}\,.
\end{equation}
Then $b_n$ is a Fuss-Catalan number (\cite{DM3} eq.~(10))
\begin{equation}
b_n = -A_n(1+1/k, -1)\,.
\end{equation}
The principal root $\zeta_0$ is given by (\cite{DM3} eq.~(8))
\begin{equation}
\label{eq:DM_prin}
  \zeta_0 = 2 - 2k\sum_{m=1}^\infty b_{mk} \, (p^kq)^m
  = 2 - 2k\sum_{m=1}^\infty \frac{b_{mk}}{2^{m(k+1)}} \,.
\end{equation}
The secondary roots $\zeta_j$, $j=1,\dots,k-1$ are given by (\cite{DM3} eq.~(7))
\begin{equation}
\label{eq:secroots}
\zeta_j = 2\sum_{n=1}^\infty b_n \, (e^{2\pi ij/k}pq^{1/k})^n 
= 2\sum_{n=1}^\infty b_n \, \biggl(\frac{e^{2\pi ij/k}}{2^{(k+1)/k}}\biggr)^n \,. 
\end{equation}
\begin{proposition}  
Wolfram (\cite{Wolfram} Theorem 3.9) derived the following sum for the principal root.
The root was expressed as $\zeta_0 = 2(1-\varepsilon_k)$ and
\begin{equation}
\label{eq:W_vareps_k}
\varepsilon_k = \sum_{i\ge1} \binom{(k+1)i-2}{i-1}\frac{1}{i2^{(k+1)i}} \,.
\end{equation}
Wolfram's solution is equivalent to our expression in eq.~\eqref{eq:DM_prin}.
\end{proposition}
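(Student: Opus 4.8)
The plan is to establish the equivalence by showing that the two series for $\varepsilon_k$ agree \emph{term by term}. Equation~\eqref{eq:DM_prin} writes $\zeta_0 = 2 - 2k\sum_{m\ge1} b_{mk}\,2^{-m(k+1)}$, while Wolfram writes $\zeta_0 = 2(1-\varepsilon_k)$; hence the proposition is exactly the identity
\begin{equation}
\label{eq:prop_target}
k\sum_{m=1}^\infty \frac{b_{mk}}{2^{m(k+1)}} \;=\; \sum_{i=1}^\infty \binom{(k+1)i-2}{i-1}\frac{1}{i\,2^{(k+1)i}} \,,
\end{equation}
and since the two sums run over the same index set with the same power of $2$, it suffices to prove $k\,b_{mk} = \tfrac1m\binom{(k+1)m-2}{m-1}$ for every integer $m\ge1$.

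First I would specialize the definition \eqref{eq:def_bn} to the arithmetic progression $n=mk$, which is the only place $b_n$ is evaluated in eq.~\eqref{eq:DM_prin}:
\begin{equation}
b_{mk} = \frac{\Gamma\!\bigl(mk-1+m\bigr)}{(mk)!\,\Gamma(m)} = \frac{\Gamma\!\bigl(m(k+1)-1\bigr)}{(mk)!\,\Gamma(m)} \,.
\end{equation}
Because $m\ge1$ is an integer and $m(k+1)-1\ge k\ge2$, both $\Gamma$-arguments are positive integers, so $\Gamma(m)=(m-1)!$ and $\Gamma\!\bigl(m(k+1)-1\bigr)=\bigl(m(k+1)-2\bigr)!$, giving $k\,b_{mk} = k\bigl(m(k+1)-2\bigr)!\big/\bigl((mk)!\,(m-1)!\bigr)$.

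Next I would put the right-hand summand of \eqref{eq:prop_target} into the same shape. Using the elementary identity $\bigl(m(k+1)-2\bigr)-(m-1)=mk-1$, the binomial coefficient is $\binom{(k+1)m-2}{m-1} = \bigl(m(k+1)-2\bigr)!\big/\bigl((m-1)!\,(mk-1)!\bigr)$, so that
\begin{equation}
\frac1m\binom{(k+1)m-2}{m-1} = \frac{\bigl(m(k+1)-2\bigr)!}{m\,(m-1)!\,(mk-1)!} = \frac{k\,\bigl(m(k+1)-2\bigr)!}{(mk)!\,(m-1)!} = k\,b_{mk} \,,
\end{equation}
where the middle equality follows on multiplying numerator and denominator by $k$ and using $mk\,(mk-1)!=(mk)!$. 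Dividing by $2^{m(k+1)}$ and summing over $m\ge1$ yields \eqref{eq:prop_target}, hence the proposition. Both series converge absolutely on a common disc (the radius being governed by the Fuss--Catalan growth rate, as in DM), so the term-by-term identity is all that is required.

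There is no genuine analytic obstacle; the argument is a one-line conversion of $\Gamma$-values into factorials followed by the trivial cancellation $k/(mk)\cdot mk = k$. The only point worth stating explicitly is that restricting $n$ to the progression $n=mk$ in \eqref{eq:def_bn} makes every $\Gamma$-argument a positive integer, which is precisely what licenses the passage from $\Gamma$-functions to a single ordinary binomial coefficient — away from this progression $b_n$ is a Fuss--Catalan number only in the generalized (Gamma-function) sense, but such terms never enter eq.~\eqref{eq:DM_prin}.
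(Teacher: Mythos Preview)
Your proof is correct and follows essentially the same route as the paper: both arguments specialize $b_n$ at $n=mk$, convert the resulting $\Gamma$-values to factorials, and verify the termwise identity $k\,b_{mk}=\tfrac1m\binom{(k+1)m-2}{m-1}$, from which the equality of the two series for $\varepsilon_k$ is immediate. Your additional remarks on convergence and on why the $\Gamma$-arguments become integers are accurate but not needed for the paper's purposes.
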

\begin{proof}
From eq.~\eqref{eq:def_bn}, note that
\begin{equation}
b_{mk} = \frac{\Gamma(mk-1+m)}{(mk)!\Gamma(m)} = \frac{(mk+m-2)!}{(mk!)(m-1)!} = \frac{1}{mk}\binom{(k+1)m-2}{m-1} \,.
\end{equation}
Next using eq.~\eqref{eq:DM_prin}, we deduce
\begin{equation}
\begin{split}
\varepsilon_k &= k\sum_{m=1}^\infty \frac{b_{mk}}{2^{(k+1)m}} 
\\
&= k\sum_{m=1}^\infty \frac{1}{mk}\binom{(k+1)m-2}{m-1}\frac{1}{2^{(k+1)m}} 
\\
&= \sum_{m=1}^\infty \binom{(k+1)m-2}{m-1}\frac{1}{m2^{(k+1)m}} \,.
\end{split}
\end{equation}
This equals eq.~\eqref{eq:W_vareps_k}, substitute $i$ for $m$.
\end{proof}
However, Wolfram did not publish expressions for the secondary roots.
Spickerman and Joyner (\cite{SpickermanJoyner} Sec.~3) computed the roots numerically 
(to $4$ decimal places) for $2\le k\le 10$.
Using eqs.~\eqref{eq:DM_prin} and \eqref{eq:secroots}, we confirm their numbers are correct.

It remains now to derive expressions for $F_k(n)$ for specific initial conditions,
and also to derive asymptotic expressions for $n\gg1$
(in which case the solution is dominated by the contribution from the principal root).
To do so, it will be helpful below to work with the polynomial $B(x) = (x-1)A(x)$ for the following reason.
Note that
\begin{equation}
\label{eq:A_frac}
\begin{split}
A(x) &= x^k -x^{k-1} -\dots -1
\\
&= x^k - \frac{x^k-1}{x-1}
\\
&= \frac{x^{k+1}-2x^k+1}{x-1} \,.
\end{split}
\end{equation}
Hence $B(x) = x^{k+1}-2x^k+1$ is a trinomial, which is easier to manipulate. However, it contains an extraneous root $x=1$.
Note that $A(1)=1-k\ne0$, i.e.~$x=1$ is not a root of $A(x)$, hence $B(x)$ has no repeated roots.

%\newpage
\setcounter{equation}{0}
\section{Properties of roots}\label{sec:proproots}
We may read off several properties of the roots from DM \cite{DM3}.
First, the principal root lies in the interval (\cite{DM3} Props.~8 and 11)   
\begin{equation}
\label{eq:DM_bounds_prin}
2 - \frac{2}{k+1} < \zeta_0 < 2 - \frac{1}{2^k} \,.
\end{equation}
Wolfram (\cite{Wolfram} Lemma 3.6) derived the bounds 
\begin{equation}
\label{eq:W_bounds_prin}
2(1-2^{-k}) < \zeta_0 < 2 \,.
\end{equation}
Eq.~\eqref{eq:DM_bounds_prin} has a tighter upper bound and
eq.~\eqref{eq:W_bounds_prin} has a tighter lower bound.
Combining them, we obtain the improved bounds
\begin{equation}
\label{eq:better_bounds_prin}
2 - \frac{1}{2^{k-1}} < \zeta_0 < 2 - \frac{1}{2^k} \,.
\end{equation}
Next, DM (\cite{DM3} Prop.~14) showed that the secondary roots satisfy $|\lambda_j|<p$.
Setting $p=\frac12$, this implies $|\zeta_j| < 1$ for $j=1,\dots,k-1$.
Wolfram (\cite{Wolfram} Lemma 3.6) obtained a tighter lower bound
\begin{equation}
\label{eq:W_bounds_sec}
  3^{-1/k} < |\zeta_j| < 1 \,.
\end{equation}
This corrects a misprint in \cite{Wolfram}, where the lower bound is given as $3^{-k}$
(but the value $3^{-1/k}$ is stated correctly in the \emph{proof} of Lemma 3.6 in \cite{Wolfram}).
Next, the relative magnitudes of \emph{distinct} roots are as follows (\cite{DM3} Prop.~16)
\begin{enumerate}
\item
  $|\zeta_{j_1}| = |\zeta_{j_2}|$ if and only if $\zeta_{j_1} = \bar{\zeta}_{j_2}$.
\item
  $\Re(\zeta_{j_1}) < \Re(\zeta_{j_2})$ if and only if $|\zeta_{j_1}| < |\zeta_{j_2}|$.  
\item
  For even $k$, the negative real root has the most negative
  real part, hence the smallest amplitude of all the roots.
\end{enumerate}
Spickerman and Joyner (\cite{SpickermanJoyner} Sec.~3) computed the roots numerically 
for $2\le k\le 10$ and their numbers satisfy the above relations.

\emph{Distribution of the roots in the complex plane.}
Observe the following limits as $k\to\infty$:
(i) for the principal root, $\limsup_{k\to\infty}\zeta_0=2$ (from eq.~\eqref{eq:better_bounds_prin}) and
(ii) for all the secondary roots, $\limsup_{k\to\infty}|\zeta_j|=1$ (from eq.~\eqref{eq:W_bounds_sec}).
Hence all the secondary roots approach the unit circle (from the inside),
but the principal root approaches $2$ (from below).
A graph of the roots in the complex plane is plotted in Fig.~\ref{fig:roots_k40}, for $k=40$.
For $k\gg1$, the secondary roots are also distributed approximately uniformly around the unit circle.
A graph of the arguments of the secondary roots, i.e.~$\arg(\zeta_j)/(2\pi)$,
is plotted in Fig.~\ref{fig:arg_roots_k40}, for $k=40$.
The points lie close to a straight line.

%\newpage
\setcounter{equation}{0}
\section{Vandermonde}\label{sec:vdm}
First recall eq.~\eqref{eq:Fn_sum_roots}.
We set up a Vandermonde system of equations to solve for the coefficients $c_j$ as follows
\begin{equation}
\label{eq:vdm_eqns}
\begin{split}
\begin{pmatrix} 
1 & 1 & \dots & 1 \\
\zeta_0 & \zeta_1 & \dots & \zeta_{k-1} \\
\zeta_0^2 & \zeta_1^2 & \dots & \zeta_{k-1}^2 \\
\vdots &&& \vdots \\
\zeta_0^{k-1} & \zeta_1^{k-1} & \dots & \zeta_{k-1}^{k-1} 
\end{pmatrix}
\begin{pmatrix} 
c_0 \\
c_1 \\
c_2 \\
\vdots \\
c_{k-1} 
\end{pmatrix}
=
\begin{pmatrix} 
F_k(0) \\
F_k(1) \\
F_k(2) \\
\vdots \\
F_k(k-1)
\end{pmatrix} \,.
\end{split}
\end{equation}
We begin with the initial conditions $F_k(0)=\dots=F_k(k-2)=0$ and $F_k(k-1)=1$.
(We treat the initial condition by Spickerman and Joyner \cite{SpickermanJoyner} in Sec.~\ref{sec:SJ}.)
The solution is (\cite{DM3}, eq.~(60))
\begin{equation}
\label{eq:sol_cell}
c_j = \frac{1}{\prod_{s\ne j}(\zeta_j-\zeta_s)} \,.
\end{equation}
Note that
\begin{equation}
B(x) = (x-1)\prod_{s=0}^{k-1}(x-\zeta_s) \,.
\end{equation}
Hence
\begin{equation}  
B^\prime(x) = \frac{B(x)}{x-1} +\sum_{s=0}^{k-1} \frac{B(x)}{x-\zeta_s} \,.
\end{equation}
Then, noting that $B(\zeta_j)=0$,
\begin{equation}  
B^\prime(\zeta_j) = (\zeta_j-1)\prod_{s\ne j}(\zeta_j-\zeta_s) \,.
\end{equation}
Also $B^\prime(x) = (k+1)x^k-2kx^{k-1}$, thus $B^\prime(\zeta_j) = ((k+1)\zeta_j-2k)\zeta_j^{k-1}$.
Hence
\begin{equation}
\begin{split}
  c_j &= \frac{1}{\prod_{s\ne j}(\zeta_j-\zeta_s)}
  = \frac{\zeta_j-1}{B^\prime(\zeta_j)}
  = \frac{\zeta_j-1}{((k+1)\zeta_j-2k)\zeta_j^{k-1}} \,.
\end{split}
\end{equation}
Hence the solution for $F_k(n)$ is
\begin{equation}
\label{eq:sol_Fnk}
F_k(n) = \sum_{j=0}^{k-1} \frac{\zeta_j-1}{(k+1)\zeta_j-2k} \zeta_j^{n-k+1} \,.
\end{equation}
This is our expression for the $k$-generalized Fibonacci numbers, as a sum over powers of the roots of the auxiliary polynomial.
The expressions for special cases such as the tribonacci numbers ($k=3$) and tetranacci numbers ($k=4$) are known in radicals.
However eq.~\eqref{eq:sol_Fnk} presents a unified formula for all $k\ge2$.
The use of Fuss-Catalan numbers furnishes a systematic formula to compute the roots to arbitrary accuracy.
\begin{remark}
  The Binet-style sum in eq.~\eqref{eq:sol_Fnk} also works for $n<0$.
  The recurrence can be run backwards to negative values of $n$.
\end{remark}
\begin{remark}
An expression equivalent to eq.~\eqref{eq:sol_Fnk} was published by Dresden and Du (\cite{DresdenDu} Theorem 1),
where the roots were denoted by $\alpha_j$, $j=1,\dots,k$.
However, they did not exhibit expressions for the roots.
They denoted the $k$-generalized Fibonacci numbers by $F_n^{(k)}$, with the recurrence
$F_n^{(k)} = F_{n-1}^{(k)} +\dots +F_{n-k}^{(k)}$, but with the initial conditions $F_n^{(k)}=0$ for $n<1$ and $F_n^{(k)}=1$ for $n=1$.
The relation to our notation is $F_k(n) = F_{n-k+2}^{(k)}$.
\end{remark}

%\newpage
\setcounter{equation}{0}
\section{Asymptotic solution}\label{sec:asymp}
For fixed $k$ and $n\gg k$, the asymptotic solution is given by retaining only
the principal root $\zeta_0$ in eq.~\eqref{eq:sol_Fnk}.
The details of the analysis are given in DM (\cite{DM3} Corollary 4).
Use $B(\zeta_0)=0$, i.e.~$\zeta_0^k(\zeta_0-2)+1=0$ to deduce $\zeta_0^{-k} = 2-\zeta_0$.
Then (c.f.~\cite{DM3}, eq.~(16), case $p\ne k/(k+1)$)
\begin{equation}
\begin{split}
F_k(n) &\asymp \frac{\zeta_0-1}{(k+1)\zeta_0-2k} \zeta_0^{n-k+1}
\\
&= \frac{(2-\zeta_0)(\zeta_0-1)}{(k+1)\zeta_0-2k} \zeta_0^{n+1} \,.
\end{split}
\end{equation}
Following DM, we demand that for fixed $\varepsilon>0$,
the magnitude of the contribution to $F_k(n)$ in eq.~\eqref{eq:sol_Fnk}
from all the secondary roots combined
is less than $\varepsilon$ times the contribution from the principal root.
This is achieved if $n\ge N(p,k)$, where (\cite{DM3}, eq.~(17))
\begin{equation}
N = k +\frac{\ln(\varepsilon/(k-1))}{\ln\kappa} \,.
\end{equation}
In our case $\kappa$ is given by (\cite{DM3}, eq.~(18), case $1/(k+1) < p < k/(k+1)$)
\begin{equation}
\kappa = \min\biggl\{\frac{k+1}{2k},\, 1-\delta\biggr\} \,.
\end{equation}
Here (\cite{DM3}, eq.~(19))
\begin{equation}
\delta = \frac{1}{(k+1)^{1/k}}\biggl[ 1 + \frac{1}{2^{(k+1)/k}k} -\sqrt{1+\frac{1}{2^{2(k+1)/k}k^2} +\frac{\cos(2\pi/k)}{2^{1/k}k} }\biggr] \,.
\end{equation}
Then $\delta\to0$ as $k\to\infty$, hence the expression for $\kappa$ simplifies to $\kappa=(k+1)/(2k)$ 
and we obtain the bound
\begin{equation}
N = k +\frac{\ln(\varepsilon/(k-1))}{\ln((k+1)/(2k))} \,.
\end{equation}
\begin{remark}
The above analysis was leveraged from \cite{DM3}.
For the specific application to $k$-generalized Fibonacci numbers, Dresden and Du \cite{DresdenDu} derived a simpler expression.
They showed (\cite{DresdenDu} Theorem 2) that the contribution of all the secondary roots is less than $\frac12$ in magnitude,
and $F_k(n)$ can be obtained from the principal root $\zeta_0$ alone as follows.
We employ our notation in eq.~\eqref{eq:sol_Fnk}, whence
\begin{equation}
\label{eq:Fnk_prin_rnd}
F_k(n) = \left\lfloor \biggl(\frac{\zeta_0-1}{(k+1)\zeta_0-2k} \zeta_0^{n-k+1} +\frac12\biggr) \right\rfloor \,.
\end{equation}
\end{remark}

%\newpage
\setcounter{equation}{0}
\section{Alternative initial conditions: Spickerman and Joyner}\label{sec:SJ}
Spickerman and Joyner \cite{SpickermanJoyner} solved the recurrence eq.~\eqref{eq:rec_Fnk} as follows.
First to avoid confusions of notation, we follow them and write $u_n$ in place of $F_k(n)$.
They employed the initial conditions
$(u_0,\dots,u_{k-1})=(1,1,2,4,\dots,2^{k-2})$, i.e.~$u_0 = 1$ and $u_n = 2^{n-1}$ for $1 \le n \le k-1$.
Hence the next term is $u_k = 1+1+2+4+\dots+2^{k-2} = 2^{k-1}$.
Let us change this slightly and shift the indices down by one (and discard $u_0$) and say instead
$(v_0,\dots,v_{k-1})=(1,2,4,\dots,2^{k-1})$, i.e.~$v_n = 2^n$ for $0\le n \le k-1$.
Let us generalize this even more and replace $2$ by $\mu$,
and write $w_n = \mu^n$ for $0\le n \le k-1$.
Then the solution for $c_j$ in the Vandermonde matrix equations in Sec.~\ref{sec:vdm} is easy;
it is the Lagrange polynomial $c_j = L_j(\mu)$.
The Lagrange polynomial $L_j(x)$ is
\begin{equation}
\label{eq:Lpoly}
L_j(x) = \frac{(x-\zeta_0)\dots(x-\zeta_{j-1})(x-\zeta_{j+1})\dots(x-\zeta_{k-1})}{(\zeta_j-\zeta_0)\dots
  (\zeta_j-\zeta_{j-1})(\zeta_j-\zeta_{j+1})\dots(\zeta_j-\zeta_{k-1})} \,.
\end{equation}
This can be obtained from the auxiliary polynomial, written in the form
(see eq.~\eqref{eq:A_frac})
$A(x) = (x^{k+1}-2x^k+1)/(x-1)$.
The denominator of $L_j(x)$ is $D_j = A^\prime(\zeta_j)$ and does not depend on $x$.
The numerator of $L_j(\mu)$ is $N_j(\mu) = A(\mu)/(\mu-\zeta_j)$, provided $\mu$ is not a root of $A(x)$.
For now, we assume $\mu$ is not a root of $A(x)$.
Then the numerator is
\begin{equation}
N_j(\mu) = \frac{\mu^{k+1}-2\mu^k+1}{(\mu-1)(\mu-\zeta_j)} \,.
\end{equation}
The derivative of $A(x)$ is
\begin{equation}
A^\prime(x) = \frac{(k+1)x^k-2kx^{k-1}}{x-1} -\frac{x^{k+1}-2x^k+1}{(x-1)^2} \,.
\end{equation}
The denominator is given by (note that the second term in $A^\prime(x)$ vanishes for $x=\zeta_j$)
\begin{equation}
\label{eq:denom_Lpoly}
D_j = \frac{(k+1)\zeta_j^k-2k\zeta_j^{k-1}}{\zeta_j-1} = \frac{(k+1)\zeta_j-2k}{\zeta_j-1}\zeta_j^{k-1} \,.
\end{equation}
Hence the coefficient $c_j$ is 
\begin{equation}
\label{eq:cell_SJ}  
c_j = \frac{N_j(\mu)}{D_j} = \frac{\mu^{k+1}-2\mu^k+1}{(\mu-1)(\mu-\zeta_j)} \frac{\zeta_j-1}{(k+1)\zeta_j-2k}\frac{1}{\zeta_j^{k-1}} \,.
\end{equation}
For $\mu=1$, set $\mu=1+\varepsilon$ and take the limit $\varepsilon\to0$ to obtain 
\begin{equation}
\lim_{\mu\to1} c_j = \frac{1-k}{(1-\zeta_j)} \frac{\zeta_j-1}{(k+1)\zeta_j-2k}\frac{1}{\zeta_j^{k-1}}
= \frac{k-1}{(k+1)\zeta_j-2k}\frac{1}{\zeta_j^{k-1}} \,.
\end{equation}
\emph{What if $\mu$ is a root of $A(x)$?}
Actually this is easy.
If $\mu=\zeta_i$ then by construction of the Lagrange polynomials,
$L_i(\zeta_i)=1$ and $L_j(\zeta_i)=0$ for $i\ne j$.
Hence $c_i=1$ and $c_j=0$ for $i\ne j$, i.e.~$c_j = \delta_{ij}$.
\begin{proposition}  
Our expression in eq.~\eqref{eq:cell_SJ}, evaluated for $\mu=2$, is equivalent to Spickerman and Joyner's solution \cite{SpickermanJoyner}.
\end{proposition}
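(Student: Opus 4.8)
The plan is to specialize the coefficient formula eq.~\eqref{eq:cell_SJ} to $\mu=2$, simplify it using two elementary facts, and then match the result term by term against the Binet-style expression published in \cite{SpickermanJoyner}. The two facts are: (a) $A(2)=2^k-(2^{k}-1)=1\ne0$, so $\mu=2$ is not a root of $A(x)$ and eq.~\eqref{eq:cell_SJ} applies directly, with no limiting procedure needed; and (b) $B(\zeta_j)=\zeta_j^{k+1}-2\zeta_j^{k}+1=0$ for every root $\zeta_j$ (since $B(x)=(x-1)A(x)$), which gives $\zeta_j^{k}(2-\zeta_j)=1$, i.e.~$1/(2-\zeta_j)=\zeta_j^{k}$.

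First I would evaluate the numerator $B(\mu)=\mu^{k+1}-2\mu^{k}+1$ at $\mu=2$, obtaining $B(2)=2^{k+1}-2^{k+1}+1=1$, while $\mu-1=1$; hence eq.~\eqref{eq:cell_SJ} collapses to $c_j=\big[(2-\zeta_j)\,\zeta_j^{k-1}\big]^{-1}(\zeta_j-1)/\big((k+1)\zeta_j-2k\big)$. Using fact (b) to replace $1/(2-\zeta_j)$ by $\zeta_j^{k}$ (equivalently, $c_j=A(2)/\big((2-\zeta_j)A'(\zeta_j)\big)=\zeta_j^{k}/A'(\zeta_j)$ via eq.~\eqref{eq:denom_Lpoly}), this becomes the compact form
\[
c_j=\frac{(\zeta_j-1)\,\zeta_j}{(k+1)\zeta_j-2k}\,.
\]
Therefore, with the index conventions fixed in this section ($w_n=\mu^n$, $v_n=w_n|_{\mu=2}$, $v_n=u_{n+1}$), one gets $u_{n+1}=\sum_{j=0}^{k-1}c_j\zeta_j^{n}=\sum_{j=0}^{k-1}\frac{\zeta_j-1}{(k+1)\zeta_j-2k}\,\zeta_j^{n+1}$, which by comparison with eq.~\eqref{eq:sol_Fnk} is exactly $F_k(n+k)$. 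This doubles as a sanity check: the tail $F_k(k-1),F_k(k),F_k(k+1),\dots$ of the standard $k$-generalized Fibonacci sequence equals $1,1,2,4,\dots$, which is precisely Spickerman and Joyner's initial data.

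It then remains only to write down the closed form of \cite{SpickermanJoyner} and verify that it coincides with the expression above. I expect this last step to be the only real work, and it is purely bookkeeping: one must line up their labelling of the roots with ours, absorb the offset between their indexing of $u_n$ and the shifted sequence $v_n=w_n$, and rewrite their coefficients (which are phrased in terms of the characteristic polynomial and its derivative) using $1/(2-\zeta_j)=\zeta_j^{k}$ together with $A'(\zeta_j)=\big((k+1)\zeta_j-2k\big)\zeta_j^{k-1}/(\zeta_j-1)$ from eq.~\eqref{eq:denom_Lpoly}. Once the notations are aligned the identity is immediate; no estimates or new ideas are required.
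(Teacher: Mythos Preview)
Your proposal is correct and follows essentially the same route as the paper: you specialize eq.~\eqref{eq:cell_SJ} at $\mu=2$, use $B(2)=1$ and $1/(2-\zeta_j)=\zeta_j^{k}$ to obtain the identical simplified coefficient $c_j=(\zeta_j-1)\zeta_j/((k+1)\zeta_j-2k)$, and then compare with Spickerman--Joyner's explicit formula after the index shift $w_n=u_{n+1}$. The paper carries out that last comparison by writing their coefficient as $d_j=\zeta_j^{k}(\zeta_j-1)/(2\zeta_j^{k}-(k+1))$ and checking $c_j=d_j\zeta_j$ via $\zeta_j^{k+1}=2\zeta_j^{k}-1$; your extra observation that $u_{n+1}=F_k(n+k)$ is a pleasant sanity check the paper does not mention, but otherwise the arguments coincide.
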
  
\begin{proof}  
Spickerman and Joyner treated only the case $\mu=2$.
We know that $A(2)=1$, hence $2$ is not a root of $A(x)$, whence
\begin{equation}
\label{eq:SJ_cj}
\begin{split}
  c_j &= \frac{2^{k+1}-2^{k+1}+1}{(2-1)(2-\zeta_j)} \frac{\zeta_j-1}{(k+1)\zeta_j-2k}\frac{1}{\zeta_j^{k-1}}
  \\
  &= \frac{1}{1/\zeta_j^k} \frac{\zeta_j-1}{(k+1)\zeta_j-2k}\frac{1}{\zeta_j^{k-1}}
  \\
  &= \zeta_j \frac{\zeta_j-1}{(k+1)\zeta_j-2k} \,.
\end{split}
\end{equation}
The expression for $u_n$ by Spickerman and Joyner is (unnumbered, second last line on p.~328 of \cite{SpickermanJoyner})
\begin{equation}
\label{eq:SJ_un}
u_n = \sum_{j=0}^{k-1} \frac{(\alpha_j^{k+1}-\alpha_j^k)\alpha_j^n}{2\alpha_j^k - (k+1)} \,.
\end{equation}
They denoted the roots by $\alpha_j$, whereas we say $\zeta_j$.
We thus express eq.~\eqref{eq:SJ_un} in the form $u_n = \sum_{j=0}^{k-1}d_j\zeta_j^n$, where
\begin{equation}
\label{eq:SJ_dj}
\begin{split}
d_j &= \frac{(\zeta_j^{k+1}-\zeta_j^k)}{2\zeta_j^k - (k+1)} 
\\
&= \zeta_j^k \frac{\zeta_j-1}{2\zeta_j^k - (k+1)} \,. 
\end{split}
\end{equation}
Because of the index shift of $n$ by $1$, the relation of $w$ to $u$ is $w_n(\mu=2) = u_{n+1}$.
Our claim is therefore $c_j\zeta_j^n = d_j\zeta_j^{n+1}$, or $c_j = d_j\zeta_j$.
Employing eqs.~\eqref{eq:SJ_cj} and \eqref{eq:SJ_dj} yields
\begin{equation}
\begin{split}
  \frac{c_j-d_j\zeta_j}{\zeta_j(\zeta_j-1)} &= \frac{1}{(k+1)\zeta_j-2k} - \frac{\zeta_j^k}{2\zeta_j^k-(k+1)}
  \\
  &\propto (2\zeta_j^k-(k+1)) - \zeta_j^k((k+1)\zeta_j-2k)
  \\
  &= 2\zeta_j^k-(k+1) - (k+1)\zeta_j^{k+1}+2k\zeta_j^k
  \\
  &= 2\zeta_j^k-2(k+1)\zeta_j^k+2k\zeta_j^k
  \\
  &= 0 \,.
\end{split}
\end{equation}
Our claim is validated.
\end{proof}  
Note that although Spickerman and Joyner \cite{SpickermanJoyner} gave a correct formal expression for the coefficient $d_j$,
they did \emph{not} offer an explicit expression for the roots of the auxiliary polynomial.
Instead they solved the auxiliary equation numerically for $k=2,\dots,10$ and tabulated the (approximate) numerical values of the roots (\cite{SpickermanJoyner} Sec.~3).
As stated above, our expressions using Fuss-Catalan numbers match their numbers.

%\newpage
\setcounter{equation}{0}
\section{Basis sequences}\label{sec:basis_seq}
\subsection{$k$-generalized Fibonacci}
The $k$-generalized Fibonacci numbers $F_k(n)$ are the solution
of the recurrence eq.~\eqref{eq:rec_Fnk} for only one set of initial conditions,
viz.~$F_k(0)=\dots=F_k(k-2)=0$ and $F_k(k-1)=1$.
We would like a set of $k$ `basis sequences' or `fundamental solutions'
$\{B_{k,i}(n),\,i=0,\dots,k-1\}$
of the recurrence eq.~\eqref{eq:rec_Fnk},
where the sequence $B_{k,i}(n)$ satisfies the initial conditions
$B_{k,i}(n) = \delta_{in}$ for $n=0,\dots,k-1$.
The $k$-generalized Fibonacci numbers $F_k(n)$
are the last member of the basis, viz.~$F_k(n) = B_{k,k-1}(n)$.

Wolfram \cite{Wolfram2000} solved the problem in a more general setting.
Wolfram treated the homogeneous linear recurrence (\cite{Wolfram2000}, eq.~(2.1))
\begin{equation}
\label{eq:W_rec_orig}
f(n) = \sum_{j=1}^k a_{k-j} f(n-j) \,.
\end{equation}
Here $\{a_0,\dots,a_{k-1}\}$ are a set of coefficients which do not depend on $f$ but \emph{can} depend on $n$.
Let us define $\mathscr{W}_i(n)$ to be the solution of
eq.~\eqref{eq:W_rec_orig} with the initial values
$\mathscr{W}_i(n) = \delta_{in}$ for $n=0,\dots,k-1$.
(For brevity, we omit explicit mention of $k$ and $a_0,\dots,a_{k-1}$.)
The set of sequences $\{\mathscr{W}_i(n),\,i=0,\dots,k-1\}$ are a set of
fundamental solutions of the recurrence eq.~\eqref{eq:W_rec_orig}.
Wolfram proved that (\cite{Wolfram2000} Lemma 2.2) 
\begin{equation}
\label{eq:W_basis_orig}
\mathscr{W}_i(n) = \sum_{j=0}^i a_{i-j} \mathscr{W}_{k-1}(n-j-1) \qquad\qquad (i=0,\dots,k-2) \,.
\end{equation}
\emph{Hence it is only necessary to solve the recurrence eq.~\eqref{eq:W_rec} for the last basis function $\mathscr{W}_{k-1}(n)$.}
All the other basis functions can be obtained from it. This is a key insight.

We treat only constant coefficients in this paper.
Let $\{\beta_1,\dots,\beta_k\}$ be a set of constant coefficients and reexpress eq.~\eqref{eq:W_rec_orig} as follows.
(Note the indexing of the $\beta_j$, to maintain consistency with other usage of indexing in this paper.)
\begin{equation}
\label{eq:W_rec}
f(n) = \sum_{j=1}^k \beta_j f(n-j) \,.
\end{equation}
We define $W_i(n)$ to be the solution of
eq.~\eqref{eq:W_rec} with the initial values
$W_i(n) = \delta_{in}$ for $n=0,\dots,k-1$.
Then, from eq.~\eqref{eq:W_basis_orig},
\begin{equation}
\label{eq:W_basis}
W_i(n) = \sum_{s=0}^i \beta_{k+s-i} W_{k-1}(n-s-1) \qquad\qquad (i=0,\dots,k-2) \,.
\end{equation}
We shall employ $W_i(n)$ and eq.~\eqref{eq:W_basis} in the rest of this paper.
\begin{proposition}
The basis sequences for the $k$-generalized Fibonacci numbers are given by
\begin{equation}
\label{eq:Fnk_basis_sum_roots}
B_{k,k-m}(n)
= \sum_{j=0}^{k-1} \frac{\zeta_j^m-2\zeta_j^{m-1}+1}{(k+1)\zeta_j-2k} \zeta_j^{n-k+1} \qquad (m=1,\dots,k) \,.
\end{equation}
\end{proposition}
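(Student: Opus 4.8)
The plan is to reduce everything to the Binet-style formula eq.~\eqref{eq:sol_Fnk} for $F_k(n)=B_{k,k-1}(n)$, which is precisely the $m=1$ instance of eq.~\eqref{eq:Fnk_basis_sum_roots}, and then to descend through the basis using Wolfram's recursion eq.~\eqref{eq:W_basis}. For the recurrence eq.~\eqref{eq:rec_Fnk} all coefficients equal unity, i.e.~$\beta_j=1$ for $j=1,\dots,k$, and $W_i(n)=B_{k,i}(n)$ with $W_{k-1}(n)=F_k(n)$. Setting $i=k-m$ in eq.~\eqref{eq:W_basis} (so that $i$ runs over $0,\dots,k-2$ exactly when $m$ runs over $2,\dots,k$) and using $\beta_{k+s-i}=1$, one obtains
\begin{equation*}
B_{k,k-m}(n)=\sum_{s=0}^{k-m} F_k(n-s-1) \qquad (m=2,\dots,k),
\end{equation*}
while the case $m=1$ is eq.~\eqref{eq:sol_Fnk} itself.

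Next I would substitute eq.~\eqref{eq:sol_Fnk} in the form $F_k(n-s-1)=\sum_{j=0}^{k-1} c_j\zeta_j^{\,n-s-k}$ with $c_j=(\zeta_j-1)/((k+1)\zeta_j-2k)$, interchange the two finite sums, and pull $\zeta_j^{\,n-k}$ out of the inner sum over $s$. The inner sum is the finite geometric series $\sum_{s=0}^{k-m}\zeta_j^{-s}=(\zeta_j^{\,k-m+1}-1)/(\zeta_j^{\,k-m}(\zeta_j-1))$, which is legitimate since $A(1)=1-k\ne0$ forces $\zeta_j\ne1$. The factor $\zeta_j-1$ then cancels the one in $c_j$, leaving
\begin{equation*}
B_{k,k-m}(n)=\sum_{j=0}^{k-1}\frac{\zeta_j^{\,n-k+1}-\zeta_j^{\,n-2k+m}}{(k+1)\zeta_j-2k}\,.
\end{equation*}

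The final step, and the only non-routine one, is to reshape the numerator using the trinomial relation $B(\zeta_j)=\zeta_j^{k+1}-2\zeta_j^{k}+1=0$, equivalently $\zeta_j^{-k}=2-\zeta_j$ --- which is exactly why $B(x)$ was introduced in Sec.~\ref{sec:fusscat}. Writing $\zeta_j^{\,n-2k+m}=\zeta_j^{\,n-k+m}\zeta_j^{-k}=(2-\zeta_j)\zeta_j^{\,n-k+m}$ turns the numerator into $\zeta_j^{\,n-k+m+1}-2\zeta_j^{\,n-k+m}+\zeta_j^{\,n-k+1}=(\zeta_j^{m}-2\zeta_j^{m-1}+1)\zeta_j^{\,n-k+1}$, which is the claimed formula; I would then note that $m=1$ recovers eq.~\eqref{eq:sol_Fnk}, and that $m=k$ gives $B_{k,0}(n)=F_k(n-1)$ after one more application of $B(\zeta_j)=0$, matching eq.~\eqref{eq:W_basis} at $i=0$. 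A cleaner-looking but longer alternative would verify directly that the right-hand side of eq.~\eqref{eq:Fnk_basis_sum_roots} solves eq.~\eqref{eq:rec_Fnk} (immediate, since every $\zeta_j^n$ does) and satisfies $B_{k,k-m}(n)=\delta_{k-m,n}$ for $n=0,\dots,k-1$; the obstacle there is evaluating the Vandermonde/partial-fraction sums $\sum_j c_j\zeta_j^{\,p}$ for $0\le p\le k-1$, so I would not take that route.
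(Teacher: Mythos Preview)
Your proposal is correct and matches the paper's own proof essentially step for step: the paper likewise specializes eq.~\eqref{eq:W_basis} to $\beta_j\equiv1$ to obtain $B_{k,k-m}(n)=\sum_{s=0}^{k-m}F_k(n-s-1)$, substitutes eq.~\eqref{eq:sol_Fnk}, sums the geometric series in $\zeta_j^{-s}$, and simplifies via the trinomial relation (written there in the equivalent form $\zeta_j^{-k-1}=2\zeta_j^{-1}-1$ rather than your $\zeta_j^{-k}=2-\zeta_j$). Your explicit remark that $\zeta_j\ne1$ and the $m=k$ sanity check are nice touches but do not alter the argument.
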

\begin{proof}
The $k$-generalized Fibonacci recurrence eq.~\eqref{eq:rec_Fnk} is the special case $\beta_1=\dots=\beta_k=1$ of eq.~\eqref{eq:W_rec}.
It follows immediately from eq.~\eqref{eq:W_basis}, where our last basis function is $F_k(n)$, that
\begin{equation}
\label{eq:Fnk_basis}
B_{k,i}(n) = \sum_{s=0}^i F_k(n-s-1) \,.
\end{equation}
We employ the expression in eq.~\eqref{eq:sol_Fnk} and set $i=k-m$.
In the line tagged by $(*)$ below,
we employ the auxiliary equation to deduce $\zeta_j^{-k-1} = 2\zeta_j^{-1}-1$, because $\zeta_j^{k+1}-2\zeta_j^k+1=0$ (see eq.~\eqref{eq:A_frac}).
\begin{equation}
\begin{split}
  B_{k,k-m}(n) &= \sum_{s=0}^{k-m} F_k(n-s-1)
  \\
  &= \sum_{s=0}^{k-m} \sum_{j=0}^{k-1} \frac{\zeta_j-1}{(k+1)\zeta_j-2k} \frac{\zeta_j^{n-k+1}}{\zeta_j^{s+1}}
  \\
  &= \sum_{j=0}^{k-1} \frac{\zeta_j-1}{(k+1)\zeta_j-2k} \zeta_j^{n-k+1} \biggl(\frac{1}{\zeta_j}\sum_{s=0}^{k-m} \frac{1}{\zeta_j^s}\biggr)
  \\
  &= \sum_{j=0}^{k-1} \frac{\zeta_j-1}{(k+1)\zeta_j-2k} \zeta_j^{n-k+1} \frac{1-\zeta_j^{m-k-1}}{\zeta_j-1}
  \\
  &= \sum_{j=0}^{k-1} \frac{1}{(k+1)\zeta_j-2k} \zeta_j^{n-k+1} (1 -\zeta_j^m(2\zeta_j^{-1}-1)) \qquad\qquad (*)
  \\
  &= \sum_{j=0}^{k-1} \frac{1}{(k+1)\zeta_j-2k} \zeta_j^{n-k+1} (\zeta_j^m -2\zeta_j^{m-1} +1) \,.
\end{split}
\end{equation}
Rearranging terms in the numerator yields eq.~\eqref{eq:Fnk_basis_sum_roots}.
\end{proof}
\begin{remark}
Observe that eq.~\eqref{eq:Fnk_basis_sum_roots}
is a Binet-style sum for each value of $m$, and is valid for negative $n$.
The dependence on $m$ has a simple pattern.
The case $m=1$ equals eq.~\eqref{eq:sol_Fnk}.
To the author's knowledge, eq.~\eqref{eq:Fnk_basis_sum_roots} has not been published in the literature.
\end{remark}

Next, given a tuple of initial values $\vec{\gamma}=(\gamma_0,\dots,\gamma_{k-1})$ for $n=0,\dots,k-1$,
the solution $B(n,k,\vec{\gamma})$ of the recurrence eq.~\eqref{eq:rec_Fnk} is given by the linear combination
\begin{equation}
B(n,k,\vec{\gamma}) = \sum_{i=0}^{k-1}\gamma_i B_{k,i}(n) \,.
\end{equation}
We can leave it at this, because eq.~\eqref{eq:Fnk_basis_sum_roots} furnishes expressions for all the basis functions $B_{k,i}(n)$.

However, Wolfram's formalism \cite{Wolfram2000} and eq.~\eqref{eq:W_basis}
furnishes tools to offer a solution for the more general recurrence in eq.~\eqref{eq:W_rec}.
Using eq.~\eqref{eq:W_basis}, we can also express everything in terms of only the last basis function, which is $W_{k-1}(n)$.
Given a tuple of initial values $\vec{\gamma}=(\gamma_0,\dots,\gamma_{k-1})$ for $n=0,\dots,k-1$,
denote the solution of the recurrence eq.~\eqref{eq:W_rec} by $W(n,k,\vec{\gamma})$.
Then
\begin{equation}
\begin{split}
W(n,k,\vec{\gamma}) &= \sum_{i=0}^{k-1}\gamma_i W_i(n)
\\
&= \gamma_0 \beta_k W_{k-1}(n-1)
\\
&\quad
+\gamma_1 \bigl(\beta_{k-1}W_{k-1}(n-1) +\beta_kW_{k-1}(n-2)\bigr)
%\\
%&\quad
% +\gamma_2 \bigl(\beta_{k-2}W_{k-1}(n-1) +\beta_{k-1}W_{k-1}(n-2) +\beta_kW_{k-1}(n-3)\bigr)
\\
&\quad
 +\dots
\\
&\quad
 +\gamma_{k-2} \bigl(\beta_2W_{k-1}(n-1) +\beta_3W_{k-1}(n-2) +\dots +\beta_kW_{k-1}(n-k+1)\bigr)
\\
&\quad
 +\gamma_{k-1} W_{k-1}(n)
\\
&= \bigl(\beta_k\gamma_0 +\beta_{k-1}\gamma_1 +\dots +\beta_2\gamma_{k-2}\bigr) W_{k-1}(n-1) 
\\
&\quad +\bigl(\beta_k\gamma_1 +\beta_{k-1}\gamma_2 +\dots +\beta_3\gamma_{k-2}\bigr) W_{k-1}(n-2)  
\\
&\quad +\dots
\\
&\quad +\beta_k\gamma_{k-2} W_{k-1}(n-k+1) 
\\
&\quad +\gamma_{k-1} W_{k-1}(n)
\\
&= \gamma_{k-1} W_{k-1}(n) +\sum_{i=1}^{k-1} W_{k-1}(n-i) \biggl(\sum_{j=2}^{k-i+1} \beta_{j+i-1}\gamma_{k-j}\biggr) \,.
\end{split}
\end{equation}
The nested sums over $j$ do not depend on $n$, hence they can be precomputed,
say $\delta_i \equiv \sum_{j=2}^{k-i+1} \beta_{j+i-1}\gamma_{k-j}$ for $i=1,\dots,k-1$
and formally define $\delta_0 = \gamma_{k-1}$. Then
\begin{equation}
W(n,k,\vec{\gamma}) = \sum_{i=0}^{k-1} \delta_i W_{k-1}(n-i) \,.
\end{equation}

%\newpage
\setcounter{equation}{0}
\section{Companion matrix}\label{sec:compmtx}
This section was prompted by an insightful comment by a referee.
It contains alternative formalism for the solution of linear recurrences with constant coefficients.
Our exposition follows Chen and Louck \cite{ChenLouck}.
Recall the recurrence in eq.~\eqref{eq:W_rec}, reproduced here for ease of reference.
\begin{equation}
\label{eq:Fnbeta}
f(n) = \beta_1f(n-1) +\cdots +\beta_kf(n-k) \,.
\end{equation}
The characteristic polynomial of the recurrence in eq.~\eqref{eq:Fnbeta} is
$p_k(x) = x^k -\beta_1x^{k-1} -\beta_2x^{k-2} -\dots -\beta_k$.
The companion matrix $C_k$ of $p_k(x)$ is defined as follows (\cite{ChenLouck}, eq.~(2.1))
\begin{equation}
\label{eq:ck}
  C_k = \begin{pmatrix} \beta_1 & \beta_2 & \beta_3 & \dots & \beta_{k-1} & \beta_k \\
    1 & 0 & 0 & \dots & 0 & 0 \\
    0 & 1 & 0 & \dots & 0 & 0 \\
    \vdots & \vdots &\vdots & \dots & \vdots & \vdots \\
    0 & 0 & 0 & \dots & 1 & 0
    \end{pmatrix} \,.
\end{equation}
The characteristic equation of the matrix $C_k$ is $p_k(\lambda)=0$, which is also
the characteristic equation of the recurrence in eq.~\eqref{eq:Fnbeta}.
(For the special case $\beta_1=\dots=\beta_k=1$,
the Fuss-Catalan series for the roots $\zeta_j$, $j=0,\dots,k-1$ in
eqs.~\eqref{eq:DM_prin} and \eqref{eq:secroots}
are the eigenvalues of the companion matrix $C_k$.)
Then (\cite{ChenLouck} Theorem 3.1) the $(i,j)$ entry $c_{i,j}^{(n)}$ in the matrix $C_k^n$ is given by  
(\cite{ChenLouck} eq.~(3.1), note that the indexing is $1 \le i,j \le k$)
\begin{equation}
\label{eq:cijn}
  c_{i,j}^{(n)} = \sum_{i_1+2i_2+\dots+ki_k = n-i+j} \frac{i_j+\dots+i_k}{i_1+\dots+i_k}
  \binom{i_1+i_2+\dots+i_k}{i_1,i_2,\dots, i_k}\,\beta_1^{i_1}\dots\beta_k^{i_k} \,.
\end{equation}
Also, $c_{i,j}^{(n)}$ is defined to equal unity if $n=i-j$.
The companion matrix can be employed to iterate the solution of a
linear recurrence with constant coefficients as follows (\cite{ChenLouck}, Sec.~7).
Iterating $n\ge1$ times yields (\cite{ChenLouck} eq.~(7.3))
\begin{equation}
\begin{pmatrix} f(n+k-1) \\ f(n+k-2) \\ \vdots \\ f(n) \end{pmatrix} = C_k^n\,
\begin{pmatrix} f(k-1) \\ f(k-2) \\ \vdots \\ f(0) \end{pmatrix} \,.
\end{equation}
The value of $f(n+k-i)$ in left-hand column vector
is given by the dot product of the $i^{th}$ row of the matrix $C_k^n$ with the $k$-tuple of initial values $f(0)$ through $f(k-1)$.
For fixed $1 \le j \le k$,
if the initial $k$-tuple is $(f(0),\dots,f(k-1))=(\dots,\delta_{ij},\dots)$, where ``$\dots$'' denotes zeroes,
the solution for $f(n+k-i)$ is given by $c_{i,j}^{(n)}$, $1 \le i \le k$.
It is evident that the matrix elements in the $j^{th}$ column, $1 \le j \le k$,
are the elements of the $(k-j)^{th}$ basis sequence of the recurrence, viz.~$W_{k-j}(n-k-i)=c_{i,j}^{(n)}$, for $1 \le i \le k$.
\begin{proposition}
\label{thm:ident_cmpmtx}  
  Wolfram's Lemma 2.2 \cite{Wolfram2000}
  can be employed to express $c_{i,j}^{(n)}$ for $j=2,\dots,k$ in terms of the entries $c_{i,1}^{(\dots)}$
  in the first column of the companion matrix as follows
\begin{equation}
\label{eq:cijn_ident}  
c_{i,j}^{(n)} = \sum_{s=0}^{\min(k-j,n-1)} \beta_{j+s} c_{i,1}^{(n-s-1)} \qquad\qquad (j=2,\dots,k) \,.
\end{equation}
\end{proposition}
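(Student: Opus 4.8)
The plan is to translate the statement entirely into the language of basis sequences and then invoke Wolfram's Lemma 2.2, which the excerpt already records as eq.~\eqref{eq:W_basis}. The key observation, stated just before the proposition, is the dictionary $W_{k-j}(n-k-i) = c_{i,j}^{(n)}$ for $1 \le i \le k$, $1 \le j \le k$; equivalently, setting $i$ aside as a fixed "row" parameter, the numbers $c_{i,j}^{(n)}$ as functions of $(n,j)$ are just relabellings of the fundamental solutions $W_{k-j}$ of the recurrence eq.~\eqref{eq:W_rec}. So the proposed identity eq.~\eqref{eq:cijn_ident} is nothing more than eq.~\eqref{eq:W_basis} rewritten under this dictionary, together with a careful bookkeeping of the shift in the argument and the truncation of the summation range.

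First I would fix $1 \le i \le k$ and, for $j = 2,\dots,k$, write $W_{k-j}$ in terms of $W_{k-1}$ using eq.~\eqref{eq:W_basis}. With the index correspondence $W_{k-j} \leftrightarrow W_i$ of eq.~\eqref{eq:W_basis}, i.e.\ taking the "$i$" of that equation to be $k-j$, eq.~\eqref{eq:W_basis} reads
\begin{equation}
W_{k-j}(n) = \sum_{s=0}^{k-j} \beta_{j+s}\, W_{k-1}(n-s-1) \,,
\end{equation}
since $\beta_{k+s-(k-j)} = \beta_{j+s}$. Next I would substitute the dictionary on both sides: on the left, $W_{k-j}$ evaluated at the appropriate shifted argument is $c_{i,j}^{(n)}$; on the right, $W_{k-1}$ at the correspondingly shifted argument is $c_{i,1}^{(n-s-1)}$, because the "$1$" in $c_{i,1}$ corresponds to $W_{k-1}$. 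Carrying the same argument-shift through term by term gives
\begin{equation}
c_{i,j}^{(n)} = \sum_{s=0}^{k-j} \beta_{j+s}\, c_{i,1}^{(n-s-1)} \,,
\end{equation}
which is eq.~\eqref{eq:cijn_ident} before the upper limit is trimmed. The remaining point is the truncation to $\min(k-j, n-1)$: one must check that for $s > n-1$ the term $c_{i,1}^{(n-s-1)}$ either vanishes or is exactly cancelled, using the convention stated after eq.~\eqref{eq:cijn} that $c_{i,j}^{(n)} = 1$ when $n = i-j$ and is understood to be $0$ for the out-of-range negative arguments that arise here; equivalently one simply restricts to the range where the basis-sequence identity eq.~\eqref{eq:W_basis} was asserted, namely $n-s-1 \ge 0$, and verifies that the dropped terms contribute nothing. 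This is a matter of matching the initial-condition conventions in \cite{ChenLouck} with those in \cite{Wolfram2000}.

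The main obstacle I anticipate is purely one of index hygiene rather than mathematical depth: reconciling the two indexing schemes — the $1 \le i,j \le k$ of the companion-matrix entries $c_{i,j}^{(n)}$ versus the $0 \le i \le k-1$ of the basis sequences $W_i$ — and tracking the argument shift $n \mapsto n-k-i$ implicit in the dictionary $W_{k-j}(n-k-i) = c_{i,j}^{(n)}$, so that the shift $s+1$ appearing inside eq.~\eqref{eq:W_basis} lands correctly as the superscript $n-s-1$ in eq.~\eqref{eq:cijn_ident}. A clean way to sidestep errors is to present the argument for a fixed value of $i$ and phrase everything as an identity among the sequences $c_{i,1}^{(\cdot)}$ and $c_{i,j}^{(\cdot)}$ regarded as solutions of eq.~\eqref{eq:Fnbeta} with $\beta$-coefficients, appealing to eq.~\eqref{eq:W_basis} once the initial data have been matched. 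With that, the proof is essentially a substitution and the truncation of the summation range is read off from the support of the first-column sequence.
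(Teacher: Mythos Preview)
Your proposal is correct and follows exactly the approach the paper takes: the paper's entire proof is the single sentence ``Transcribe the notation in eq.~\eqref{eq:W_basis}, accounting for differences in the indexing,'' and you have carried out precisely that transcription, with the added care of tracking the dictionary $W_{k-j}\leftrightarrow c_{i,j}^{(n)}$ and justifying the truncation of the upper limit to $\min(k-j,n-1)$. If anything, your write-up supplies more detail than the paper itself.
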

\begin{proof}
Transcribe the notation in eq.~\eqref{eq:W_basis}, accounting for differences in the indexing.
\end{proof}
Hence for $n>1$ it is only necessary to compute the entries in the first column of the companion matrix from scratch;
all the rest are linear combinations of those numbers.
This may be computationally helpful.
Eq.~\eqref{eq:cijn_ident} has been numerically tested and validated.
Wolfram's paper \cite{Wolfram2000} was published in 2000, i.e.~recently,
hence the above identity may not be known in the literature for companion matrices.
It is not mentioned by Chen and Louck \cite{ChenLouck}, whose paper is dated 1996.

%\newpage
\setcounter{equation}{0}
\section{Generating functions and multinomial sums}\label{sec:mult_sum}
\subsection{General remarks}
We remark on alternative expressions using generating functions and multinomial sums.
A referee kindly brought to the author's attention that
the multinomial sum in eq.~\eqref{eq:gen_multsum} below is
a Dickson polynomial of the second kind in several variables.
The connection to Dickson polynomials is shown below.
\begin{enumerate}
\item  
In this section, the arguments of all functions are nonnegative integers.
One can run all the recurrences backwards,
but the multinomial sums displayed below are valid only for nonnegative arguments.
\item  
To avoid ambiguity, we apply a circumflex, e.g.~$\hat{f}$, to denote the solution of a recurrence
when the initial $k$-tuple is $(0,\dots,0,1)$.
\end{enumerate}
  
\subsection{$k$-generalized Fibonacci}
Lee, Lee, Kim and Shin \cite{LeeLeeKimShin}
derived the Vandermonde set of equations, and published the formal solution, but did not find the roots explicitly.
Their last formula (unnumbered) expresses the $k$-generalized Fibonacci numbers using multinomial sums.
In terms of our notation, $F_k(0)=\dots=F_k(k-2)=0$, $F_k(k-1)=1$ and for $n\ge0$
\begin{equation}
\label{eq:Fnk_multsum}
\begin{split}
  \hat{F}_k(n+k-1) &= \sum_{i_1+2i_2+\dots+ki_k = n} \frac{(i_1+i_2+\dots+i_k)!}{i_1!i_2!\dots i_k!} 
  \\
  &= \sum_{i_1+2i_2+\dots+ki_k = n} \binom{i_1+i_2+\dots+i_k}{i_1,i_2,\dots, i_k} \,.
\end{split}
\end{equation}
The proof is given by employing the generating function for the $k$-generalized Fibonacci numbers 
\begin{equation}
G(z) = \frac{z^{k-1}}{1-z-z^2 -\dots -z^k} \,.
\end{equation}
Expand the denominator using the multinomial theorem and collect terms to obtain the coefficient of $z^{n+k-1}$.

\subsection{Linear recurrence with constant coefficients}\label{sec:more_gen}
We can generalize the recurrence and multinomial sum as follows.
We employ the recurrence in eq.~\eqref{eq:W_rec}.
The generating function is
\begin{equation}
G(z) = \frac{z^{k-1}}{1-\beta_1z-\dots-\beta_kz^k} \,.
\end{equation}
By construction, the initial values are $f(0)=\dots=f(k-2)=0$ and $f(k-1)=1$
and the remaining values of are given by the multinomial sum, for $n\ge0$,
\begin{equation}
\label{eq:gen_multsum}
\hat{f}(n+k-1) = \sum_{i_1+2i_2+\dots+ki_k = n} \binom{i_1+i_2+\dots+i_k}{i_1,i_2,\dots, i_k}\,\beta_1^{i_1}\dots\beta_k^{i_k} \,.
\end{equation}
\begin{remark}
Levesque \cite{Levesque1985} published a similar generating function and multinomial sums.
However, the numerator of Levesque's generating function incorporated arbitrary initial values for $f(0),\dots,f(k-1)$.
Our precomputed sums of linear combinations of the initial values in Sec.~\ref{sec:basis_seq} are similar to
Levesque's expressions for $v_i$ in (\cite{Levesque1985} eq.~(2.2)).
\end{remark}
A referee kindly brought to the author's attention that the sum in eq.~\eqref{eq:gen_multsum}
is a Dickson polynomial of the second kind in $k$ variables.
(The Dickson polynomials of the first and second kind, in one variable, are related to the Chebyshev polynomials.)
Observe that $\hat{f}(n+k-1)$ in eq.~\eqref{eq:gen_multsum} equals the matrix element $c_{1,1}^{(n)}$ (see eq.~\eqref{eq:cijn}).
The Dickson polynomial of the second kind in $k$ variables also equals $c_{1,1}^{(n)}$.
See eq.~(5.2) by Chen and Louck \cite{ChenLouck}, with the transcriptions $m\gets k-1$ and $u_i \gets \beta_i$.
From Prop.~\ref{thm:ident_cmpmtx}, all the other matrix elements $c_{i,j}^{(n)}$ are linear combinations of Dickson polynomials.

\subsection{Narayana sequence}
The Narayana sequence is given by the initial values $N(0)=N(1)=N(2)=1$ and for $n\ge3$ by the recurrence
\begin{equation}
\label{eq:Narayana_rec}  
N(n) = N(n-1) + N(n-3) \,.
\end{equation}
Then $k=3$ in eq.~\eqref{eq:Fnbeta} and the coefficients are
$(\beta_1,\beta_2,\beta_3)=(1,0,1)$.
We employ the basis functions $W_i(n)$ from Sec.~\ref{sec:basis_seq}
for this and the other sequences treated below.
The basis function with the initial values $(0,0,1)$ is $W^N_2(n)$. 
(We attach a superscript `N' for Narayana.)
The other basis functions are
$W^N_0(n)=\beta_3W^N_2(n-1)=W^N_2(n-1)$
and
$W^N_1(n)=\beta_2W^N_2(n-1)+\beta_3W^N_2(n-2) =W^N_2(n-2)$.
Using the given initial values, the solution is
\begin{equation}
\begin{split}
N(n) &= W^N_0(n) +W^N_1(n) +W^N_2(n)
\\
&= W^N_2(n-1) + \underbrace{W^N_2(n-2) +W^N_2(n)}_{=W^N_2(n+1)}
\\
&= W^N_2(n-1) +W^N_2(n+1)
\\
&= W^N_2(n+2) \,.
\end{split}
\end{equation}
For the multinomial sum, because $\beta_2=0$, we exclude all terms in $i_2$ in eq.~\eqref{eq:gen_multsum} with $i_2>0$.
This yields, using $i_1 = n-3i_3$,
\begin{equation}
W^N_2(n+2) = \sum_{i_1+3i_3= n} \binom{i_1+i_3}{i_1,i_3}
  = \sum_{i_3=0}^{\lfloor(n/3)\rfloor} \binom{n-2i_3}{i_3} \,.
\end{equation}
The first few values of $N(n)$ are $1,1,1,2,3,4,6,9,13,19,\dots$.

\subsection{Padovan sequence}\label{sec:pad}
The Padovan sequence is given by the initial values $P(0)=P(1)=P(2)=1$ and for $n\ge3$ by the recurrence
\begin{equation}
\label{eq:Padovan_rec}  
P(n) = P(n-2) + P(n-3) \,.
\end{equation}
Then $k=3$ in eq.~\eqref{eq:Fnbeta} and the coefficients are
$(\beta_1,\beta_2,\beta_3)=(0,1,1)$.
The basis function with the initial values $(0,0,1)$ is $W^P_2(n)$.
(We attach a superscript `P' for Padovan, and it can also mean Perrin in Sec.~\ref{sec:per} below.)
The other basis functions are
$W^P_0(n)=\beta_3W^P_2(n-1)=W^P_2(n-1)$
and
$W^P_1(n)=\beta_2W^P_2(n-1)+\beta_3W^P_2(n-2) =W^P_2(n-1)+W^P_2(n-2)$.
Using the given initial values, the solution is
\begin{equation}
\begin{split}
P(n) &= W^P_0(n) +W^P_1(n) +W^P_2(n)
\\
&= \underbrace{W^P_2(n-1) +W^P_2(n-2)}_{=W^P_2(n+1)} + \underbrace{W^P_2(n-1) +W^P_2(n)}_{=W^P_2(n+2)}
\\
&= W^P_2(n+1) +W^P_2(n+2)
\\
&= W^P_2(n+4) \,.
\end{split}
\end{equation}
For the multinomial sum,
because $\beta_1=0$, we exclude all terms in $i_1$ in eq.~\eqref{eq:gen_multsum} with $i_1>0$.
This yields
\begin{equation}
\label{eq:F_Padovan}
W^P_2(n+2) = \sum_{2i_2+3i_3=n} \binom{i_2+i_3}{i_2,i_3} \,.
\end{equation}
The first few values of $P(n)$ are $1,1,1,2,2,3,4,5,7,9,12,16,\dots$.

\subsection{Perrin sequence}\label{sec:per}
The Perrin and Padovan sequences satisfy the same recurrence, but with different initial values.
We write `$Q(n)$' to avoid confusion with the Padovan sequence $P(n)$.
The initial values are $Q(0)=3$, $Q(1)=0$ and $Q(2)=2$ and for $n\ge3$ the recurrence is
\begin{equation}
\label{eq:Perrin_rec}  
Q(n) = Q(n-2) + Q(n-3) \,.
\end{equation}
Because the recurrence eq.~\eqref{eq:Perrin_rec} is the same as eq.~\eqref{eq:Padovan_rec},
the basis functions are the same as in Sec.~\ref{sec:pad},
viz.~$W^P_i(n)$, $i=0,1,2$.
Using the given initial values, the solution is
\begin{equation}
\begin{split}
Q(n) &= 3W^P_0(n) +2W^P_2(n)
\\
&= 3W^P_2(n-1) +2W^P_2(n)
\\
&= 3\bigl(\,\underbrace{W^P_2(n-1) +W^P_2(n)}_{=W^P_2(n+2)}\,\bigr) -W^P_2(n)
\\
&= 3W^P_2(n+2) -W^P_2(n) \,.
\end{split}
\end{equation}
We can express the Perrin numbers as a linear combination of the Padovan numbers.
One choice (not unique) is $Q(n) = 4P(n) +2P(n+1) -3P(n+2)$.
The first few values of $Q(n)$ are $3,0,2,3,2,5,5,7,10,12,17,22,\dots$.

\section*{Acknowledgement}
The author is grateful to David Wolfram for numerous helpful comments on the manuscript
and especially for bringing Ref.~\cite{Wolfram2000} to his attention.
The author is also indebted to the referees for valuable suggestions for improvement,
including the connection to Dickson polynomials
and the reference to Chen and Louck \cite{ChenLouck}.

\section*{Disclosure statement}
The author declares there are no relevant financial or non-financial competing interests to report.

%\newpage
\bibliographystyle{amsplain}

\newpage
\begin{figure}[!htb]
\centering
\includegraphics[width=0.75\textwidth]{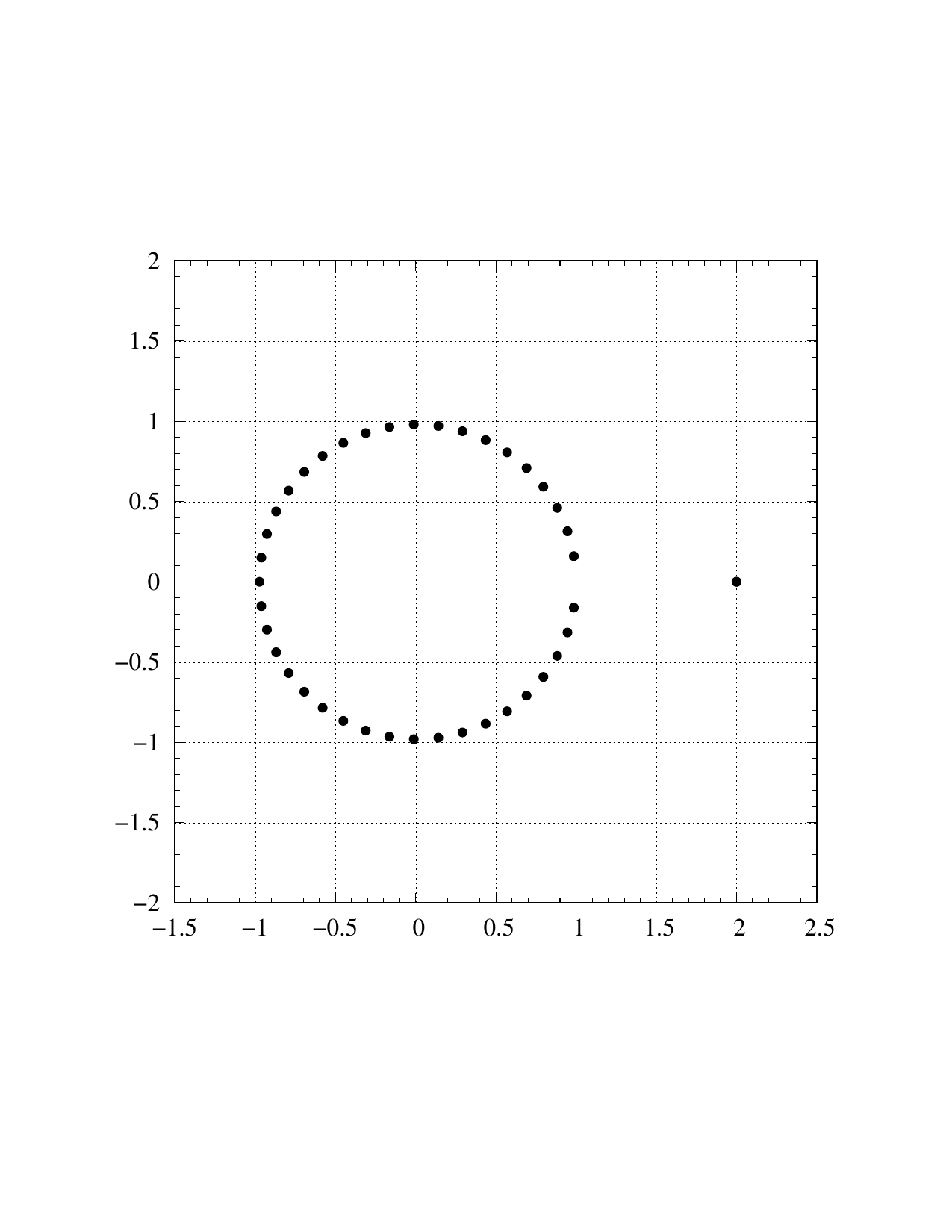}
\caption{\small
\label{fig:roots_k40}
Plot of the roots of the auxiliary polynomial in the complex plane, for $k=40$.}
\end{figure}

\newpage
\begin{figure}[!htb]
\centering
\includegraphics[width=0.75\textwidth]{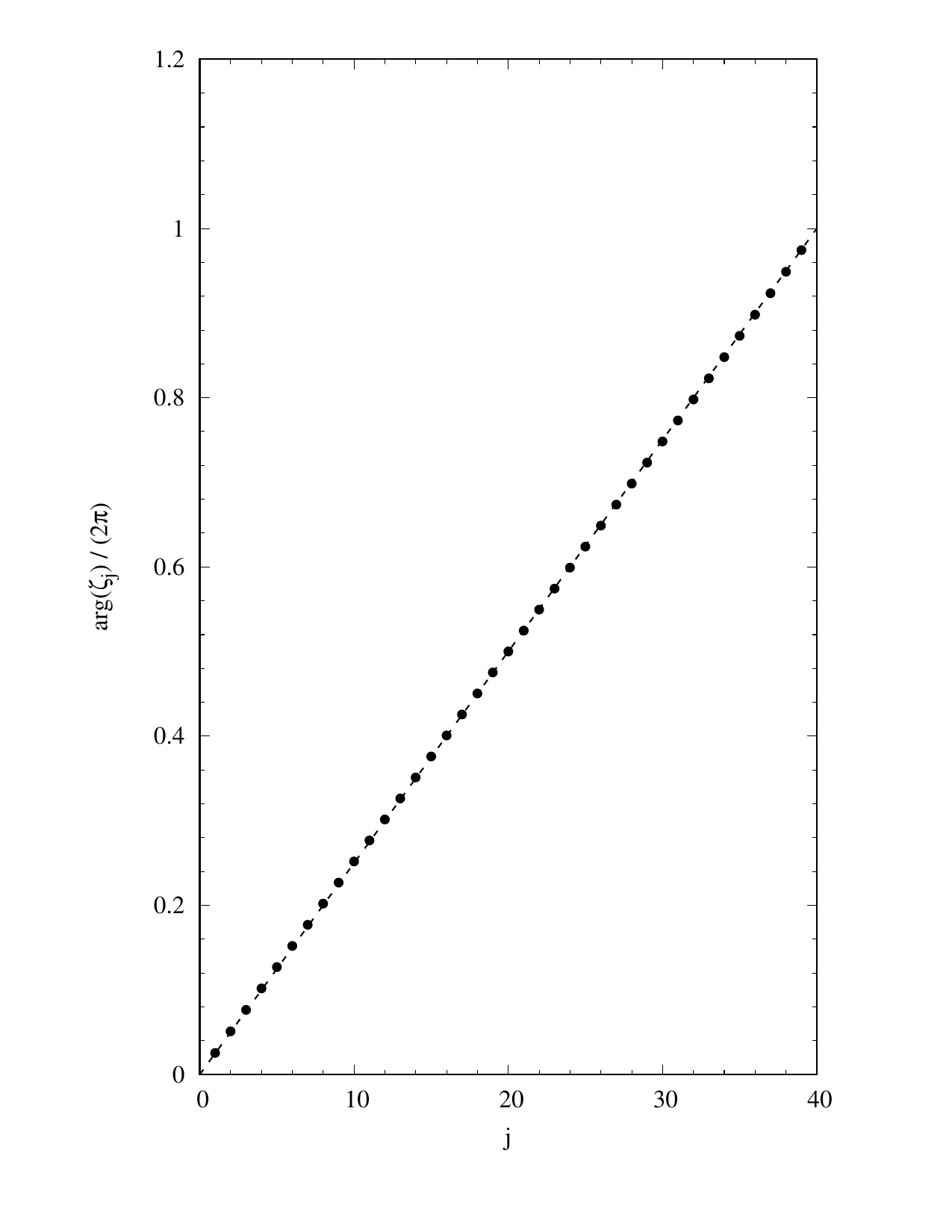}
\caption{\small
\label{fig:arg_roots_k40}
Plot of the argument of the secondary roots of the auxiliary polynomial 
$\arg(\zeta_j)/(2\pi)$, for $k=40$.}
\end{figure}

\end{document}